\theoremstyle{plain}
\newtheorem{theorem}{Theorem}[section]
\newtheorem{corollary}[theorem]{Corollary}
\newtheorem{proposition}[theorem]{Proposition}
\theoremstyle{definition}
\newtheorem*{remark}{Remark}
\newtheorem*{example}{Example}
\newcommand{\BH}{{\rm BH}}
\newcommand{\CC}{{\rm C}}
\newcommand{\boxtensor}{{\Box\kern-9.03pt\raise1.42pt\hbox{$\times$}}}
\newcommand{\ord}{\mathrm{ord}}
\newcommand{\sE}{{\mathcal E}}
\newcommand{\Q}{{\mathbb Q}}
\newcommand{\R}{{\mathbb R}}
\newcommand{\Z}{{\mathbb Z}}
\newcommand{\be}{\begin{eqnarray}}
\newcommand{\ee}{\end{eqnarray}}
\newcommand{\dd}{\displaystyle}
\newcommand{\q}{\mathfrak q}
\begin{document}

\title[Perfect Sequences]{Non-Existence of Some Nearly Perfect Sequences, Near Butson-Hadamard Matrices, and Near Conference Matrices}
\subjclass[2010]{94A55 05B20} \keywords{nearly perfect sequences, near Butson-Hadamard matrices, near conference matrices, cyclotomic number fields, ideal decomposition}

\author[A.\ Winterhof]{Arne Winterhof}
\author[O.\ Yayla]{O\u{g}uz Yayla}
\author[V.\ Ziegler]{Volker Ziegler}

\address[Arne Winterhof, O\u{g}uz Yayla, Volker Ziegler]{Johann Radon Institute for Computational and Applied Mathematics (RICAM)\\
Austrian Academy of Sciences\\
Altenbergerstr. 69\\
4040 Linz\\Austria}
\email{arne.winterhof@ricam.oeaw.ac.at}
\email{oguz.yayla@ricam.oeaw.ac.at}
\email{volker.ziegler@ricam.oeaw.ac.at}

\begin{abstract}
In this paper we study the non-existence problem of (nearly) perfect (almost) $m$-ary sequences via their connection to (near) Butson-Hadamard (BH) matrices and (near) conference matrices. 
Firstly, we apply a result on vanishing sums of roots of unity and a result of Brock on the unsolvability of certain equations over a cyclotomic number field to derive
non-existence results for near BH matrices and near conference matrices. Secondly, we refine the idea of Brock in the case of cyclotomic number fields whose ring of integers is not a principal ideal domains
and get many new non-existence results. 
\end{abstract}
\maketitle

\section{Introduction}

For an integer $m\ge 2$ let $\zeta_m$ denote a primitive complex $m$-th root of unity.
We call a $v$-periodic sequence $\underline{a} = (a_0,a_1,\ldots,a_{v-1},\ldots)$
an {\em $m$-ary sequence} if $a_0,a_1,\ldots,a_{v-1}\in \sE_m=\{1,\zeta_m,\zeta_m^2,\ldots,\zeta_m^{m-1}\}$ 
and an {\em almost $m$-ary sequence} if $a_0=0$ and $a_1,\ldots,a_{v-1}\in \sE_m$.

For $0\leq t \leq v-1$, the 
\textit{autocorrelation 
function} $C_{\underline{a}}(t)$ is defined by
$$ C_{\underline{a}}(t) = \sum_{i=0}^{v-1}{a_i\overline{a_{i+t}}},$$
where $\overline{a}$ is the complex conjugate of $a$. 

An $m$-ary or almost $m$-ary sequence $\underline{a}$ of period $v$ is called a \textit{perfect sequence} (PS) if $C_{\underline{a}}(t)=0$ for all $1\leq t \leq v-1$. 
Similarly, an almost $m$-ary sequence $\underline{a}$ of period $v$ is called a \textit{nearly perfect sequence} (NPS) of type $\gamma\in \{-1,+1\}$ 
if $C_{\underline{a}}(t)=\gamma$ for all $1 \leq t \leq v-1$.
We extend this definition to any $\gamma \in \Z[\zeta_m] \cap \mathbb{R}$ with "small" absolute value with respect to $n$.

PS and NPS have several applications such as signal processing and radar, see 
for example \cite{BJL,gogo} and references therein.

PS and NPS can be identified with circulant near Butson-Hadamard matrices and conference matrices.
A square matrix $H$ of order $v$ with entries in $\sE_m$ is called a \textit{near Butson-Hadamard matrix} $\BH_{\gamma}(v,m)$ of type $\gamma$ if 
$H\overline{H}^T= (v-\gamma)I + \gamma J$ for a 
$\gamma \in \R \cap \Z[\zeta_m]$. A $\BH_0(v,m)$ is called {\em Butson-Hadamard matrix} and a $\BH_0(v,2)$ is a {\em Hadamard matrix}.
A square matrix $C$ of order $v$ with $0$ on the diagonal and all off-diagonal entries in $\sE_m$ is called a \textit{near conference matrix} $\CC_{\gamma}(v,m)$ of type $\gamma$ if 
$C\overline{C}^T= (v-1-\gamma)I + \gamma J$ for a 
$\gamma \in \R\cap \Z[\zeta_m]$. A $\CC_0(v,m)$ is called {\em conference matrix}.
A square matrix $H=(h_{ij})$ of order $v$ is called {\em circulant} if $h_{i+1\bmod v,j+1\bmod v} = h_{i,j}$ for all $0\le i,j<v$.

In this paper we prove several new non-existence results on $BH_\gamma(v,m)$ and $\CC_\gamma(v,m)$ which can be interpreted as non-existence results for PS and NPS.
For earlier non-existence results on PS and NPS see \cite{CTZ2010,MaNg2009,OYY2012} and on Butson-Hadamard matrices \cite{Brock,Arne2000}.

First, in Section~\ref{sec.pre} and \ref{sec:self} we derive some new non-existence results on $BH_\gamma(v,m)$ and $\CC_\gamma(v,m)$ using well-known methods.
However, in Section~\ref{sec.mainresult} we present a new idea which works for all $m$ such that $\Z[\zeta_m]$ is not a principal ideal domain (e.g.\ $m=23,29,31,37,39,41,43,46,47,49,\ldots$)
and obtain some non-existence results which could not be obtained by the other methods before. 

In the last section we interpret the results on $BH_\gamma(v,m)$ and $\CC_\gamma(v,m)$ as results on PS and NPS and complete earlier tables for non-existence parameters. 


\section{A result based on vanishing sums of roots of unity} \label{sec.pre}
In this section we present a non-existence result on $\BH_\gamma(v,m)$ and $\CC_\gamma(v,m)$ based on the following result on vanishing sums of roots of unity due to Lam and Leung \cite{LL2000}.
\begin{proposition}\label{thm.LL} Let $m$ be an integer with prime factorization $m = p_1^{a_1}p_2^{a_2}\ldots p_\ell^{a_\ell}$. If there are  
$m$-th roots of unity $\xi_1,\xi_2,\ldots,\xi_v$ with $\xi_1+\xi_2+\ldots+\xi_v = 0$, then $v = p_1t_1+p_2t_2+\ldots+p_\ell t_\ell$ with non-negative integers $t_1,t_2,\ldots,t_\ell$.
\end{proposition}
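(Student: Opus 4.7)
The plan is to reformulate the vanishing sum as a polynomial divisibility statement and then split into cases based on whether $m$ is a prime power. Encode the multiset $\{\xi_1,\dots,\xi_v\}$ by writing each $\xi_i = \zeta_m^{k_i}$ with $0 \le k_i < m$ and setting
$$f(x) = x^{k_1} + x^{k_2} + \cdots + x^{k_v} \in \Z_{\ge 0}[x],$$
so that $v = f(1)$ and the hypothesis $\xi_1+\cdots+\xi_v = 0$ is equivalent to $f(\zeta_m) = 0$, i.e.\ $\Phi_m(x) \mid f(x)$ in $\Z[x]$ (using that the cyclotomic polynomial $\Phi_m$ is the monic minimal polynomial of $\zeta_m$ over $\Q$, combined with Gauss's lemma).

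For the prime-power case $\ell = 1$, $m = p^a$, I would write $f = \Phi_{p^a}\, g$ with $g \in \Z[x]$ and evaluate at $x = 1$: since $\Phi_{p^a}(1) = p$, this yields $v = p \cdot g(1)$, and non-negativity of $v$ forces $g(1) \ge 0$, giving $v = p\, t_1$ with $t_1 := g(1)$. For the general case ($\ell \ge 2$), I would induct on $v$. The inductive step proceeds by locating inside the given vanishing sum a sub-multiset of exactly $p_j$ roots of the form $\xi \cdot \{1, \omega_j, \omega_j^2, \dots, \omega_j^{p_j-1}\}$, where $\omega_j = \zeta_m^{m/p_j}$ is a primitive $p_j$-th root of unity and $\xi$ is some $m$-th root of unity — an \emph{elementary} sub-sum that vanishes automatically because $1+\omega_j+\cdots+\omega_j^{p_j-1}=0$. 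Removing it leaves a vanishing sum of length $v - p_j$, to which the induction hypothesis applies, producing $v - p_j = \sum_i p_i t_i'$ with $t_i' \ge 0$, and hence $v = p_j(t_j'+1) + \sum_{i \ne j} p_i t_i'$, of the required form.

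The main obstacle is proving that such an elementary sub-sum always exists within an arbitrary nontrivial vanishing sum. I would attack this via the splitting $\mu_m \cong \mu_{m/p_j^{a_j}} \times \mu_{p_j^{a_j}}$ for a chosen prime $p_j \mid m$: writing each $\xi_i = \alpha_i \beta_i$ along this decomposition and reducing modulo a prime ideal $\mathfrak{p} \subset \Z[\zeta_m]$ above $p_j$, the primitive $p_j^{a_j}$-th roots of unity all reduce to $1$ in the residue field, so the original relation descends to $\sum \alpha_i \equiv 0 \pmod{\mathfrak{p}}$ in characteristic $p_j$. Careful bookkeeping of the multiplicities with which each element $\alpha \in \mu_{m/p_j^{a_j}}$ occurs, combined with the prime-power base case applied inside each $\alpha$-fiber (where the inner sum $\sum_{\alpha_i = \alpha} \beta_i$ is a vanishing sum of $p_j^{a_j}$-th roots of unity), should force at least one fiber to contain a full $\mu_{p_j}$-orbit, which is precisely the elementary sub-sum needed to complete the induction.
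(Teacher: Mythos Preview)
The paper does not give its own proof of this proposition; it is quoted from Lam and Leung \cite{LL2000} and used as a black box. So there is no in-paper argument to compare against, but your proposal has a genuine gap at the inductive step that is worth flagging.

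Your induction rests on the claim that every nontrivial vanishing sum of $m$-th roots of unity contains an \emph{elementary} sub-sum, i.e.\ a full coset $\xi\{1,\omega_j,\dots,\omega_j^{p_j-1}\}$ of $\mu_{p_j}$ for some prime $p_j\mid m$. That is true when $m$ has at most two prime factors (a classical theorem of R\'edei and de~Bruijn), but it is false once $\ell\ge 3$. Take $m=30$: from $-1=\zeta_3+\zeta_3^2$ and $-1=\zeta_5+\zeta_5^2+\zeta_5^3+\zeta_5^4$ one obtains
\[
\zeta_3+\zeta_3^2+(-1)\zeta_5+(-1)\zeta_5^2+(-1)\zeta_5^3+(-1)\zeta_5^4=0,
\]
a vanishing sum of six $30$-th roots of unity. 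With $\omega=\zeta_{30}$ the six terms are $\omega^{10},\omega^{20},\omega^{21},\omega^{27},\omega^{3},\omega^{9}$, and a direct check shows that no two of them form a $\mu_2$-coset, no three form a $\mu_3$-coset, and no five form a $\mu_5$-coset. Hence no elementary sub-sum can be removed, and your induction on $v$ cannot proceed.

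The sketched justification via reduction modulo a prime above $p_j$ also fails on this example. Under $\mu_{30}\cong\mu_6\times\mu_5$ the three nonempty $\alpha$-fibers have inner sums $1$, $1$, and $\zeta_5+\zeta_5^2+\zeta_5^3+\zeta_5^4=-1$; none of these fiber sums vanish, so the prime-power base case never applies fiberwise, and no fiber contains a full $\mu_5$-orbit. Lam and Leung's actual argument does \emph{not} try to locate sub-sums inside the given relation; it works instead with the global structure of $f$ modulo $\Phi_m$ and an induction on $m$ (not on $v$), producing the representation $v=\sum_i p_i t_i$ without ever decomposing the original vanishing sum into elementary pieces.
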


\begin{remark} For the case that $m=p^a$ is the power of a prime $p$ see also \cite{Arne2000}.
In this case, if there is a vanishing sum of $v$ $m$-th roots of unity, $v$ must be divisible by $p$ and infinitely many $v$ are excluded. However, if $m$ is 
divisible by at least
two primes $p_1\ne p_2$, any sufficiently large $v$ is of the form $v=t_1p_1+t_2p_2$ with non-negative integers $t_1$ and $t_2$ and Proposition~\ref{thm.LL} excludes only a finite number
of small $v$. In particular, if $m$ is divisible by $6$ only $v=1$ is excluded. 
\end{remark}

An immediate consequence of Proposition \ref{thm.LL} is the following corollary.

\begin{corollary}\label{cor.LLex} Let $m$ be a positive integer with prime factorization  $m = p_1^{a_1}p_2^{a_2}\ldots p_\ell^{a_\ell}$ and $-\gamma$ be a 
sum of $g$ $m$-th roots of unity. 
(Note that $g$ is not unique.)
Then a $\BH_{\gamma}(v,m)$ exists only if $\dd v + g = p_1t_1+p_2t_2+\ldots+p_\ell t_\ell$ for some non-negative integers $t_1,t_2,\ldots,t_\ell$ 
and a $\CC_{\gamma}(v,m)$ exists only if $\dd v-2 + g= p_1t_1+p_2t_2+\ldots+p_\ell t_\ell$ for some non-negative integers $t_1,t_2,\ldots,t_\ell$.  If $\gamma$ 
is an integer, then 
we can choose $\dd g = - \gamma + m\frac{\gamma + \vert \gamma \vert}{2}$.
\end{corollary}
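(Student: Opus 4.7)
The plan is to reduce the existence of each matrix to a vanishing sum of $m$-th roots of unity and then apply Proposition~\ref{thm.LL}.

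For the $\BH_\gamma(v,m)$ case, I would pick any two distinct rows of $H$, say rows $0$ and $1$. By definition, their Hermitian inner product equals $\gamma$, i.e.\ $\sum_{j=0}^{v-1} h_{0,j}\overline{h_{1,j}} = \gamma$. Since every $h_{i,j} \in \sE_m$, each term $h_{0,j}\overline{h_{1,j}}$ is again an $m$-th root of unity. Writing $-\gamma$ as a sum of $g$ further $m$-th roots of unity $\eta_1,\ldots,\eta_g$ (by hypothesis) gives a vanishing sum of exactly $v+g$ $m$-th roots of unity. Lam--Leung then yields $v+g = p_1t_1+\cdots+p_\ell t_\ell$ with $t_i \ge 0$.

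For the $\CC_\gamma(v,m)$ case, I would do the same with two rows of $C$, exploiting that in each of rows $0$ and $1$ exactly one entry is zero (in distinct columns), so the inner product picks up two zero contributions; the remaining $v-2$ summands are $m$-th roots of unity with sum $\gamma$. Adding the $g$ roots representing $-\gamma$ gives a vanishing sum of $v-2+g$ roots, and Proposition~\ref{thm.LL} delivers the asserted congruence.

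For the last assertion I would verify the two cases separately. When $\gamma \ge 0$, the formula gives $g = (m-1)\gamma$; iterating $\gamma$ times the standard identity $\zeta_m + \zeta_m^2 + \cdots + \zeta_m^{m-1} = -1$ expresses $-\gamma$ as a sum of $(m-1)\gamma$ $m$-th roots of unity. When $\gamma < 0$, the formula gives $g = -\gamma$, and $-\gamma$ is trivially the sum of $-\gamma$ copies of $1 \in \sE_m$. Hence the prescribed $g$ is always realizable.

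No real obstacle is anticipated here, since the argument is essentially a direct translation of the matrix identity into a vanishing-sum statement plus an invocation of Proposition~\ref{thm.LL}; the only minor subtlety is remembering that in the conference case exactly two (not one) summands vanish because of the zero diagonal combined with the row shift.
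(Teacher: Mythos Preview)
Your proposal is correct and follows essentially the same approach as the paper: form the Hermitian inner product of two distinct rows, append the $g$ roots representing $-\gamma$ to obtain a vanishing sum of $v+g$ (resp.\ $v-2+g$) $m$-th roots of unity, and apply Proposition~\ref{thm.LL}; the case analysis for integer $\gamma$ is also the same. Your treatment of the conference case is in fact more explicit than the paper's, which leaves the ``two vanishing summands'' observation implicit.
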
 

\begin{proof}
Suppose that $H=(h_{i,j})$ is a $\BH_{\gamma}(v,m)$ (resp.~$\CC_{\gamma}(v,m)$), then
$$
\sum_{i=0}^{v-1}{h_{i,j}\overline{h_{k,i}}}-\gamma=0, \quad k \neq j.
$$
So, by Proposition \ref{thm.LL} we get the first part of the theorem. For the second part let $\gamma$ be an integer. If $\gamma \leq 0$, we can choose $g=-\gamma$. If $\gamma > 0$, then we use $-1 = \sum\limits_{i=1}^{m-1}{\zeta_m^i}$,
where $\zeta_m$ is any primitive $m$-th root of unity, and can choose $g = (m-1)\gamma$. 
\end{proof}

\begin{example}
We performed an exhaustive search on all $4 \times 4$ 5-ary circulant matrices for checking the existence of a circulant $\BH_{\gamma}(4,5)$. We obtain that a circulant $\BH_{\gamma}(4,5)$ exists only for $\gamma \in \{ -1,-\zeta_5^3 - \zeta_5^2 +1, \zeta_5^3+\zeta_5^2+2,4 \}$. 
We note that for these circulant $\BH_{\gamma}(4,5)$ we can choose values of $g$ in $\{ 1,6, 16,16 \}$. Obviously, $v+g$ is always divisible by $5$. 
In addition, we note that $\vert \zeta_5^3 + \zeta_5^2 +2 \vert \approx 0.38$, hence one can obtain a  circulant $\BH_{\gamma}(4,5)$ for a nonzero $\gamma$ such that $\vert \gamma \vert < 1$.
\end{example}

\section{A result based on a self-conjugacy condition} \label{sec:self}In this section we present non-existence results on $\BH_\gamma(v,m)$ and $\CC_\gamma(v,m)$ based on results of Brock \cite{Brock}.  



Let $p$ be a prime and $m$ a positive integer with $\gcd(p,m) = 1$. We say that $p$ is \textit{self-conjugate} modulo $m$ if the order $f$ of $p$ modulo $m$ is even 
and $p^{f/2}\equiv -1\bmod m$.
Corollary~\ref{cor.Brock} below
is based on the fact that for a positive integer $w$ there exists no solution $\alpha$ to the  equation $\alpha \overline{\alpha} 
= w$ over $\Q[\zeta_m]$ if the square-free part of $w$ is divisible by a prime which is self-conjugate modulo $m$, see \cite[Theorem~3.1]{Brock}. 
More precisely, \cite[Theorem~2.5]{Brock} implies:\\
\begin{itemize}\item  If there is a $\BH_\gamma(v,m)$, there is $\alpha \in \Q(\zeta_m)$ with  
$$\alpha \overline{\alpha}=w_1,$$
where
\begin{equation}\label{w_1}w_1=\left\{ \begin{array}{ll} (\gamma+1)v-\gamma, & \mbox{$v$ odd},\\ ((\gamma+1)v-\gamma)(v-\gamma), &\mbox{$v$ even}.\end{array}\right.
\end{equation}
\item If there is a $\CC_\gamma(v,m)$, there is $\alpha\in \Q(\zeta_m)$ with
$$\alpha \overline{\alpha}=w_2,$$
where
\begin{equation}\label{w_2} w_2=\left\{ \begin{array}{ll} (v-1)(\gamma+1), & \mbox{$v$ odd},\\ (v-1)(\gamma+1)(v-1-\gamma), &\mbox{$v$ even}.\end{array}\right.
\end{equation}
\end{itemize}

Now \cite[Theorem~3.1]{Brock} immediately implies the following result.
\begin{corollary}\label{cor.Brock}
  Let $v, m$ be positive integers and $\gamma \in \Z$. 
  Let $v_1$ (resp.\ $v_2$) be the square-free part of $w_1$ (resp.\ $w_2$) defined by $(\ref{w_1})$ (resp.\ $(\ref{w_2})$).
  If there is a prime divisor $p$ of $v_1$ (resp.\ $v_2$) such that
  $p$ is self-conjugate modulo $m$, then there exists no $\BH_{\gamma}(v,m)$ (resp.\ $\CC_\gamma(v,m)$). 
\end{corollary}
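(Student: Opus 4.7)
The plan is to argue by contradiction, in both cases (near Butson--Hadamard and near conference), by feeding the existence of the matrix into the two results of Brock already cited in the text.

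First I would handle the $\BH_\gamma(v,m)$ case. Assuming a $\BH_\gamma(v,m)$ exists, the excerpt's recollection of \cite[Theorem~2.5]{Brock} guarantees an element $\alpha \in \Q(\zeta_m)$ satisfying $\alpha \overline{\alpha} = w_1$, with $w_1$ as defined in~(\ref{w_1}). Now the hypothesis of the corollary is that some prime $p$ dividing the square-free part $v_1$ of $w_1$ is self-conjugate modulo $m$. Since $v_1$ is the square-free part of $w_1$, the $p$-adic valuation $\ord_p(w_1)$ is odd. This is precisely the configuration ruled out by \cite[Theorem~3.1]{Brock}, which asserts that $\alpha \overline{\alpha} = w$ has no solution $\alpha \in \Q(\zeta_m)$ whenever the square-free part of $w$ contains a prime that is self-conjugate modulo $m$. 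The existence of $\alpha$ therefore contradicts Brock's theorem, and no $\BH_\gamma(v,m)$ can exist.

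The $\CC_\gamma(v,m)$ case is structurally identical: existence of the matrix produces, again by \cite[Theorem~2.5]{Brock}, an $\alpha \in \Q(\zeta_m)$ with $\alpha \overline{\alpha} = w_2$, and the same appeal to \cite[Theorem~3.1]{Brock} applied to the square-free part $v_2$ of $w_2$ yields the contradiction.

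There is no real obstacle here; the corollary is a direct packaging of the two cited results, and the only point that deserves a line of explanation is the observation that ``$p$ divides the square-free part of $w_i$'' is just a convenient way to say ``$\ord_p(w_i)$ is odd,'' which is the hypothesis actually needed to invoke \cite[Theorem~3.1]{Brock}. The only mild subtlety to watch is the case distinction between $v$ odd and $v$ even built into the definitions of $w_1$ and $w_2$, but this is already absorbed in (\ref{w_1}) and (\ref{w_2}) and requires no separate treatment in the proof.
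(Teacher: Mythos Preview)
Your argument is correct and matches the paper's approach exactly: the paper simply states that the corollary follows immediately from \cite[Theorem~3.1]{Brock}, having already set up via \cite[Theorem~2.5]{Brock} the existence of $\alpha\in\Q(\zeta_m)$ with $\alpha\overline{\alpha}=w_i$. Your write-up merely spells out this implication in full, including the (correct) remark that divisibility of the square-free part by $p$ is equivalent to $\ord_p(w_i)$ being odd.
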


\begin{remark}
\begin{enumerate}[label=(\roman{*})]
\item If $\gamma<-1$, there is neither a $\BH_\gamma(v,m)$ nor a $\CC_\gamma(v,m)$ since $w_1$ and $w_2$ are negative but $\alpha\overline{\alpha}$ is non-negative.
\item For $\BH_0(v,m)$ Corollary~\ref{cor.Brock} is \cite[Theorem~4.2 (i)]{Brock}.
\item Let $p$ be a prime and $m=q^r$ or $m=2q^r$ for a prime $q\equiv 3\bmod 4$ with $\gcd(p,q)=1$ (that is $\varphi(m)/2$ is odd, where $\varphi$ denotes Euler's totient function).
If $p$ is a quadratic non-residue modulo $q$, then $p$ is self-conjugate modulo $m$.  
Hence, Corollary~\ref{cor.Brock} generalizes \cite[Theorem~5]{Arne2000} which deals with $\BH_0(v,q^r)$ and $\BH_0(v,2q^r)$ with a prime $q\equiv 3\bmod 4$.
\end{enumerate} 
\end{remark}

\section{Results using prime ideal decomposition over cyclotomic fields}\label{sec.mainresult}

In this section we refine the idea of Brock \cite{Brock}. We use the ideal decomposition of the principal ideal of a rational 
integer over the cyclotomic number field $\Q(\zeta_m)$ to find more values $w$ for which the equation $\alpha\overline\alpha=w$ has no 
solution in $\Z[\zeta_m]$ which excludes the existence of several $\BH_\gamma(v,m)$ and $\CC_\gamma(v,m)$.

More precisely, let $H$ be a $\BH_{\gamma}(v,m)$ for some integer $\gamma$. Then, $H\overline{H}^T=(v-\gamma)I+\gamma J$, $\det(H) \in \Z[\zeta_m]$ and 
$\det(H\overline{H}^T)=\det(H)\overline{\det(H)}=((\gamma+1)v-\gamma)(v-\gamma)^{v-1}$. 
Therefore, we want to find criteria for the unsolvability of the equation 
\begin{equation} \label{eq:ideal}
\alpha \overline{\alpha} = ((\gamma+1)v-\gamma)(v-\gamma)^{v-1}
\end{equation} 
over $\Z[\zeta_m]$. 
Analogously, there is no $\CC_{\gamma}(v,m)$ for some integer $\gamma$ if
\begin{equation} \label{eq:ideal2}
\alpha \overline{\alpha} = (\gamma+1)(v-1)(v-1-\gamma)^{v-1}
\end{equation} 
has no solution $\alpha\in \Z[\zeta_m]$. 

Note that Brock \cite{Brock} studied such equations over $\Q(\zeta_m)$ instead of $\Z[\zeta_m]$, where it was allowed to cancel squares. The following example 
shows that $\alpha \overline{\alpha} = w$ for some positive integer $w$ may have a solution $\alpha \in \Q(\zeta_m)$ even if there is none in $\Z[\zeta_m]$.

\begin{example}
We have $ \alpha \overline{\alpha} = 2$ with $\alpha = \frac{3 + \sqrt{-23}}{4} \in \Q(\sqrt{-23}) \leq \Q(\zeta_{23}).$ 
However, there is no solution $\alpha\in \Z[\zeta_{23}]$ since $(2) = (2,\frac{1 + \sqrt{-23}}{2})(2,\frac{1 - \sqrt{-23}}{2})$ is product of two non-principal prime ideals in $\Z[\zeta_{23}]$.
\end{example}
We now present a simple criterion  for the unsolvability of $\alpha \overline{\alpha} = w$ over $\Z[\zeta_m]$ if $\Z[\zeta_m]$ is not a principal ideal domain. 
We denote by $h_m$ the class number of the cyclotomic number field $\Q(\zeta_m)$ (and assume $h_m\ge 2$ in the following).

\begin{theorem}\label{th:Dioph}
 Let $t\ge 1$ and $e\ge 0$ be rational integers and $q$ be a rational prime such that $q\nmid t$,
\begin{equation}\label{ord} \ord_{m}(q) = \varphi(m)/2.
\end{equation} 
and $\gcd(2e+1-2k,h_m) = 1$ for all 
integers $0\leq k \leq e-1$.
 Then the equation
 \begin{equation} \label{eq:Dioph-Main} \alpha \overline{\alpha} = tq^{2e+1} \end{equation}
 has no solution over $\Z[\zeta_m]$, provided that every prime ideal $\mathfrak t \vartriangleleft \Z[\zeta_m]$ 
 with $\mathfrak t|(t)$ is principal and every prime ideal $\q \vartriangleleft\Z[\zeta_m]$ with $\q|(q)$ is non-principal.
\end{theorem}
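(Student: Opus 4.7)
The plan is to assume a solution $\alpha\in\Z[\zeta_m]$ to $\alpha\overline\alpha = tq^{2e+1}$ exists, pass to the principal-ideal identity
\[ (\alpha)(\overline\alpha) = (t)\,(q)^{2e+1}, \]
and extract an obstruction from the ideal class group of $\Q(\zeta_m)$. The basic idea is that the non-principal prime factors of $(q)$ must appear in $(\alpha)$ in an asymmetric way, and the class-number hypothesis on the odd numbers $2e+1-2k$ prevents the asymmetric part from being principal.

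First I would record the decomposition of $(q)$ in $\Z[\zeta_m]$. The condition $\ord_m(q)=\varphi(m)/2$ says the decomposition group of any prime above $q$ has index $2$ in $\Gal(\Q(\zeta_m)/\Q)$, so there are exactly two primes over $q$. If $q$ were self-conjugate modulo $m$, complex conjugation would lie in the decomposition group and fix every prime $\mathfrak{q}$ above $q$; then $v_{\mathfrak{q}}(\alpha\overline\alpha)=2\,v_{\mathfrak{q}}(\alpha)$ is even, contradicting $v_{\mathfrak{q}}(tq^{2e+1})=2e+1$ (using $q\nmid t$). So I may assume $q$ is not self-conjugate and write $(q)=\mathfrak{q}\,\overline{\mathfrak{q}}$ with $\overline{\mathfrak{q}}\neq\mathfrak{q}$.

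Next I would compare $\mathfrak{p}$-adic valuations. For every prime ideal $\mathfrak{p}$ of $\Z[\zeta_m]$ that lies neither above $q$ nor divides $(t)$, the identity forces $v_{\mathfrak{p}}(\alpha)+v_{\overline{\mathfrak{p}}}(\alpha)=0$, so both valuations vanish. Setting $k=v_{\mathfrak{q}}(\alpha)$, the conjugate relation yields $v_{\overline{\mathfrak{q}}}(\alpha)=2e+1-k$. Hence
\[ (\alpha) \;=\; \mathfrak{q}^{k}\,\overline{\mathfrak{q}}^{\,2e+1-k}\cdot\mathfrak{a}, \]
where $\mathfrak{a}$ is supported only on the prime ideals dividing $(t)$; by hypothesis each such prime is principal, so $\mathfrak{a}$ itself is a principal ideal.

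Finally I would read off the class-group obstruction. Since $(\alpha)$ and $\mathfrak{a}$ are principal, the ideal $\mathfrak{q}^{k}\overline{\mathfrak{q}}^{\,2e+1-k}$ is principal; using that $(q)=\mathfrak{q}\overline{\mathfrak{q}}$ is principal, so $[\overline{\mathfrak{q}}]=[\mathfrak{q}]^{-1}$ in the class group, this becomes
\[ [\mathfrak{q}]^{\,2k-(2e+1)}=1. \]
Let $d$ denote the order of $[\mathfrak{q}]$; then $d\geq 2$ since $\mathfrak{q}$ is non-principal, and $d\mid h_m$. Exchanging $\mathfrak{q}$ with $\overline{\mathfrak{q}}$ replaces $k$ by $2e+1-k$, so I may assume $0\leq k\leq e$. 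For $k=e$ the congruence forces $d\mid 1$, impossible; and for $0\leq k\leq e-1$ the hypothesis $\gcd(2e+1-2k,h_m)=1$ combined with $d\mid h_m$ and $d\mid(2e+1-2k)$ forces $d=1$, again contradicting $d\geq 2$. The main subtlety I foresee is isolating the self-conjugacy subcase at the start and confirming that the ``off-$q$'' part $\mathfrak{a}$ of $(\alpha)$ really is principal; once those are settled, the class-group bookkeeping is routine.
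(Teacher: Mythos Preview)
Your argument is correct and follows essentially the same route as the paper: factor $(q)=\mathfrak q_1\mathfrak q_2$ using $\ord_m(q)=\varphi(m)/2$, write $(\alpha)=\mathfrak a\,\mathfrak q_1^{2e+1-k}\mathfrak q_2^{k}$ with $\mathfrak a$ principal, and deduce that $\mathfrak q_1^{2e+1-2k}$ is principal, contradicting the gcd hypothesis. Your version is in fact a bit more careful than the paper's---you explicitly dispose of the self-conjugate possibility (which the paper leaves implicit in asserting $a+b=2e+1$) and you treat the boundary case $k=e$ separately, whereas the paper simply writes $0\le k\le e-1$ without comment.
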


\begin{proof}
Assume that there exists a solution $\alpha$ to \eqref{eq:Dioph-Main}. We consider the prime ideal decomposition over $\Q(\zeta_m)$ of the right hand side of 
\eqref{eq:Dioph-Main}. According to \cite[Theorem~2.13]{Was1997} we have $(q)=\mathfrak q_1 \dots \mathfrak q_g$, where 
$g=\varphi(m)/\ord_{m}(q)$ and $g=2$ by $(\ref{ord})$. 

We may assume that 
\begin{equation}\label{eq:alpha_decomp}
(\alpha)=\mathfrak t \mathfrak q_1^{2e+1-k} \mathfrak q_2^{k}=\mathfrak t \mathfrak q_1^{2e+1-2k} (\mathfrak q_1\mathfrak q_2)^{k},
\end{equation}
where $\mathfrak t|(t)$ is a principal ideal and $k$ is some integer with $0\leq k \leq e-1$.
Since $(\mathfrak q_1\mathfrak q_2)=(q)$ is principal, the right hand side of \eqref{eq:alpha_decomp} is principal only if 
$\mathfrak q_1^{2e+1-2k}$ is principal. Then, the order of $\mathfrak q_1$ is a divisor of $\gcd(2e+1-2k,h_m) = 1$, contradicting that $\mathfrak q_1$ is 
non-principal. 
\end{proof}

\begin{example}
\begin{itemize}
\item $\alpha\overline{\alpha}=73$ has no solution $\alpha\in \Z[\zeta_{23}]$ since $(73)=(73,(1+\sqrt{-73})/2)(73,(1-\sqrt{-73})/2)$ over $\Z[\zeta_{23}]$ 
and $ord_{23}(73)=11=\varphi(23)/2$.
\item $\alpha \overline{\alpha} = 3^3$ has no solution $\alpha\in \Z[\zeta_{47}]$ 
since  $(3) = (3,(1 + \sqrt{-47})/2)(3,(1 - \sqrt{-47})/2)$ is a product of two non-principal prime ideals in $\Z[\zeta_{47}]$
and $h_{47}=695$ is not divisible by $3$.
\end{itemize}
\end{example}

Applying Theorem~\ref{th:Dioph} to \eqref{eq:ideal}, we get a criterion for the non-existence of $\BH_{\gamma}(v,m)$:

\begin{corollary} \label{thm.main.had}
Let $v,m$ be positive integers and $\gamma \geq -1$ be an integer such that $((\gamma+1)v-\gamma)(v-\gamma)^{v-1} = tq^{2e+1}$ with integers $t\ge 1$ and $e\ge 0$ such 
that $\gcd(2e+1-2k,h_m) = 1$ for all integers $0\leq k \leq e-1$, a prime $q$ with $\gcd(t,q)=1$ and 
$\ord_m(q) = \varphi(m)/2$.  Let every prime ideal $\mathfrak t \vartriangleleft \Z[\zeta_m]$ 
 with $\mathfrak t|(t)$ be principal and every prime ideal $\q \vartriangleleft\Z[\zeta_m]$ with $\q|(q)$ be non-principal. Then there exists no 
$\BH_{\gamma}(v,m)$. 
\end{corollary}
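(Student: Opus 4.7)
The plan is to reduce the claim to a direct application of Theorem~\ref{th:Dioph} via the determinantal identity for near Butson--Hadamard matrices. All the arithmetic hypotheses of the corollary (the factorisation shape $tq^{2e+1}$, the gcd condition involving $h_m$, the order condition on $q$, and the principality/non-principality of the relevant prime ideals) have been engineered precisely so that Theorem~\ref{th:Dioph} can be invoked without extra work.

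First, I would assume for contradiction that a $\BH_\gamma(v,m)$, call it $H$, exists. Because every entry of $H$ lies in $\sE_m \subset \Z[\zeta_m]$, the determinant $\alpha := \det(H)$ is itself an element of $\Z[\zeta_m]$ (algebraic integers are closed under the ring operations that appear in the Leibniz formula). Taking determinants in the defining relation $H\overline{H}^T = (v-\gamma)I + \gamma J$ and using the spectrum of the rank-one perturbation $(v-\gamma)I+\gamma J$ (eigenvalue $(\gamma+1)v-\gamma$ on the all-ones vector and eigenvalue $v-\gamma$ on its orthogonal complement, with multiplicity $v-1$) yields
\[ \alpha\overline{\alpha} \;=\; \det(H\overline{H}^T) \;=\; ((\gamma+1)v-\gamma)(v-\gamma)^{v-1}, \]
which is exactly equation~\eqref{eq:ideal}.

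Next, using the factorisation $((\gamma+1)v-\gamma)(v-\gamma)^{v-1}=tq^{2e+1}$ supplied in the hypothesis, I would rewrite this as $\alpha\overline{\alpha} = tq^{2e+1}$, so that $\alpha$ constitutes a solution in $\Z[\zeta_m]$ to~\eqref{eq:Dioph-Main}. All side conditions of Theorem~\ref{th:Dioph}---$q \nmid t$, $\ord_m(q) = \varphi(m)/2$, the gcd condition $\gcd(2e+1-2k,h_m)=1$ for each $0\le k\le e-1$, and the principal/non-principal behaviour of the prime ideals dividing $(t)$ and $(q)$---match the hypotheses of the corollary verbatim. Theorem~\ref{th:Dioph} therefore rules out the existence of $\alpha$, contradicting the existence of $H$.

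I do not anticipate any substantive obstacle: the corollary is by design a packaged consequence of Theorem~\ref{th:Dioph}. The only routine bookkeeping is to record that $\det(H)\in\Z[\zeta_m]$ and to note that the mild side assumption $\gamma \ge -1$ keeps both $(\gamma+1)v-\gamma$ and $v-\gamma$ nonnegative, so that the factorisation $tq^{2e+1}$ with $t\ge 1,\ e\ge 0$ is meaningful as an identity of rational integers.
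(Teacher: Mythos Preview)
Your argument is correct and matches the paper's approach exactly: the paper derives \eqref{eq:ideal} from the determinantal identity for $H\overline{H}^T$ and then states the corollary as an immediate application of Theorem~\ref{th:Dioph}, just as you do. No additional ideas are needed.
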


\begin{remark} \label{rem.main}
\begin{enumerate}[label=(\roman{*})]
\item If $m =p^f$ is a prime power and $\ord_m(q)$ is even, then $q$ is self conjugate modulo $m$ and a $\BH(v,m)$ does not exist due to 
Brock \cite{Brock} (cf.\ Corollary~\ref{cor.Brock}). Thus in the prime power case we consider only $\BH_\gamma(v,m)$  such that 
$\ord_m(q)=\varphi(m)/2$ is odd, i.e.\ $p \equiv 3 \mod 4$. 
\item We know by Corollary~\ref{cor.LLex} that if a $\BH_{\gamma}(v,p^f)$ exists, then $p \mid v-\gamma$, and if a $\BH_{\gamma}(v,2p^f)$ exists then $v 
-\gamma = 2t_1 +pt_2$ for some non-negative integers $t_1$ and $t_2$.\end{enumerate}
\end{remark}

\begin{table}[!t]
\caption{The class number $h_m$ of $\Q(\zeta_m)$ for $m \leq 70$ \cite{Was1997}.}
\label{tab.classnbr}
\small{
\begin{center}
\begin{tabular}{|cc|cc|cc|cc|cc|cc|cc|}
\hline
$m$ &$h_m$ &$m$ &$h_m$ &$m$ &$h_m$ &$m$ &$h_m$ &$m$ &$h_m$ &$m$ &$h_m$ &$m$ &$h_m$ \\\hline
1 &1 &11 &1 &21 &1 &31 &9 &41 &121 &51 &5 &61 &76301 \\
\hline
2 &1 &12 &1 &22 &1 &32 &1 &42 &1 &52 &3 &62 &9 \\
\hline
3 &1 &13 &1 &23 &3 &33 &1 &43 &211 &53 &48891 &63 &7 \\
\hline
4 &1 &14 &1 &24 &1 &34 &1 &44 &1 &54 &1 &64 &17 \\
\hline
5 &1 &15 &1 &25 &1 &35 &1 &45 &1 &55 &10 &65 &64 \\
\hline
6 &1 &16 &1 &26 &1 &36 &1 &46 &3 &56 &2 &66 &1 \\
\hline
7 &1 &17 &1 &27 &1 &37 &37 &47 &695 &57 &9 &67 &853513 \\
\hline
8 &1 &18 &1 &28 &1 &38 &1 &48 &1 &58 &8 &68 &8 \\\hline
9 &1 &19 &1 &29 &8 &39 &2 &49 &43 &59 &41421 &69 &69 \\
\hline
10 &1 &20 &1 &30 &1 &40 &1 &50 &1 &60 &1 &70 &1 \\\hline
\end{tabular}
\end{center}
}
\end{table}

\begin{example}
$\BH_2(25,23), \BH_2(117,23), \BH_2(163,23), \BH_1(32,31)$  do not exist by Corollary~\ref{thm.main.had}. The existence of $\BH_{2}(v,46)$ for $v \in 
\{$25, 51, 55, 109, 111, 117, 163, 183, 193, 201, 213, 225, 247, 315, 363, 385, 433, 477$\} $ such that $v \leq 500$ are excluded by Corollary~\ref{thm.main.had}. Similarly, $\BH_{2}(v,72)$ for $v \in \{$35, 141, 181, 183, 221, 231, 243, 245, 251, 395, 431, 475$\} $ such that $v \leq 500$ are excluded 
by Corollary~\ref{thm.main.had}. These matrices are the smallest examples whose existence cannot be excluded by Corollary~\ref{cor.LLex} or Corollary~\ref{cor.Brock}. 
Finally, the existence of $\BH_2(115,94)$  can be excluded by Corollary~\ref{thm.main.had}, but not by 
Corollary~\ref{cor.LLex} or Corollary~\ref{cor.Brock}.
\end{example}

Similarly to Corollary~\ref{thm.main.had}, applying Theorem~\ref{th:Dioph} to \eqref{eq:ideal2}, we get a criterion for the non-existence of $\CC_{\gamma}(v,m)$:

\begin{corollary} \label{thm.main.conf}
Let $v,m$ be positive integers and $\gamma \geq -1$ be an integer such that $(\gamma+1)(v-1)(v-1-\gamma)^{v-1} = tq^{2e+1}$ with integers $t\ge 1$ and $e\ge 0$ such that $\gcd(2e+1-2k,h_m) = 1$
for all integers $0\leq k \leq e-1$, a prime $q$ with $\gcd(t,q)=1$ and 
$\ord_m(q) = \varphi(m)/2$.  Let every prime ideal $\mathfrak t \vartriangleleft \Z[\zeta_m]$ 
 with $\mathfrak t|(t)$ be principal and every prime ideal $\q \vartriangleleft\Z[\zeta_m]$ with $\q|(q)$ be non-principal.Then there exists no $\CC_{\gamma}(v,m)$. 
\end{corollary}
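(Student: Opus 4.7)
The plan is to argue in exact parallel with the proof of Corollary~\ref{thm.main.had}, replacing the Butson-Hadamard determinantal identity by its conference-matrix analogue. First I would suppose, for contradiction, that a $\CC_\gamma(v,m)$, say $C$, exists. Since the entries of $C$ lie in $\{0\}\cup\sE_m\subset\Z[\zeta_m]$, the determinant $\alpha := \det(C)$ is an element of $\Z[\zeta_m]$. Using the defining identity $C\overline{C}^T=(v-1-\gamma)I+\gamma J$ together with the standard formula $\det(aI+bJ)=a^{v-1}(a+bv)$ for $v\times v$ matrices, I would obtain
\[
\alpha\overline{\alpha} \;=\; \det(C\overline{C}^T) \;=\; (v-1-\gamma)^{v-1}(\gamma+1)(v-1),
\]
which is precisely equation~\eqref{eq:ideal2}. (The only tiny numerical check here is that $a+bv=(v-1-\gamma)+\gamma v=(v-1)(\gamma+1)$ after substituting $a=v-1-\gamma$, $b=\gamma$; this is the single point where the conference case diverges from the Butson-Hadamard case.)

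Next, by the hypothesis of the corollary the right-hand side factors as $tq^{2e+1}$ with $t\ge 1$, $e\ge 0$, and $q$ a rational prime satisfying precisely the assumptions of Theorem~\ref{th:Dioph}: $\gcd(t,q)=1$, $\ord_m(q)=\varphi(m)/2$, every prime ideal of $\Z[\zeta_m]$ dividing $(t)$ is principal, every prime ideal dividing $(q)$ is non-principal, and $\gcd(2e+1-2k,h_m)=1$ for every integer $0\le k\le e-1$. Applying Theorem~\ref{th:Dioph} with this value of $tq^{2e+1}$ then asserts that no element of $\Z[\zeta_m]$ satisfies $\alpha\overline{\alpha}=tq^{2e+1}$, contradicting the existence of $\alpha=\det(C)$ produced in the previous paragraph.

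I do not anticipate any real obstacle: the argument is a mechanical translation of the proof of Corollary~\ref{thm.main.had}, where the Butson-Hadamard determinantal value $((\gamma+1)v-\gamma)(v-\gamma)^{v-1}$ from \eqref{eq:ideal} is swapped for the conference-matrix value $(\gamma+1)(v-1)(v-1-\gamma)^{v-1}$ from \eqref{eq:ideal2}. The only verification worth doing carefully is the determinant identity above, and once that is in place the conclusion follows by direct invocation of Theorem~\ref{th:Dioph}.
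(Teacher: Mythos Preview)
Your proposal is correct and follows exactly the approach the paper indicates: it simply applies Theorem~\ref{th:Dioph} to equation~\eqref{eq:ideal2}, which is precisely what the paper says to do (``Similarly to Corollary~\ref{thm.main.had}, applying Theorem~\ref{th:Dioph} to \eqref{eq:ideal2}''). Your explicit verification of the determinant identity $\det\bigl((v-1-\gamma)I+\gamma J\bigr)=(v-1-\gamma)^{v-1}(\gamma+1)(v-1)$ is the only detail the paper leaves implicit, and it is carried out correctly.
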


\begin{remark}
\begin{enumerate}[label=(\roman{*})]
\item In the cases $\gamma=0$ and $h_m|v$ or $2|v$, or $\gamma=-1$, Corollary~\ref{thm.main.conf} does not yield a non-existence result for near conference matrices. 
\item We know by Corollary~\ref{cor.LLex} that if a $\CC_{\gamma}(v,p^f)$ exists, then $p \mid v-2-\gamma$, and if $\CC_{\gamma}(v,2p^f)$ exists, then $v-2-\gamma = 2t_1 +pt_2$ 
for some nonnegative integers $t_1$ and $t_2$.
\end{enumerate}
\end{remark}

\begin{example}
$\CC_2(3362,23), \CC_2(26,46),  \CC_2(124,72)$ do not exist by using Corollary~\ref{thm.main.conf}. These matrices are the smallest examples whose existence 
 cannot be excluded by Corollary~\ref{cor.LLex} or Corollary~\ref{cor.Brock}.
\end{example}

\section{Application to sequences} \label{sec.results}

In this section we apply the results of the previous sections on matrices to sequences. 
We note that a NPS of type $\gamma = 0$ refers to a PS.

We start with the new results on the non-existence of $m$-ary NPS by using Corollary~\ref{cor.Brock}. We list in Table~\ref{tab.m_ary} all 
periods $v \leq  100$ such that no $m$-ary NPS exists for $m \leq 100$. We list only the new non-existence results. The pairs $(v,m)$ listed in \cite{MaNg2009} such that an $m$-ary NPS of period $v$ does not exist are not listed again in Table 
\ref{tab.m_ary}. In other words, we extend the tables in 
\cite{MaNg2009} for composite values of $m \leq 100$. 

\begin{table}[!b]
\caption{Periods $v \leq 100$ such that an $m$-ary NPS does not exist.}\label{tab.m_ary}
\begin{center}
\small{
\begin{tabular}{|l|p{0.7\textwidth}|}
\hline
 &$(v,m)$\\
\hline
$\gamma=0$
&(45,9),
(75,25),
(99,9)\\
\hline
$\gamma=-1$
&(44,9),
(74,25),
(98,9)\\
\hline
$\gamma=1$
&(11,10),
(17,4),
(26,25),
(28,9),
(28,27),
(29,4),
(29,14),
(29,28),
(35,34),
(43,6),
(43,14),
(43,21),
(43,42),
(46,9),
(53,4),
(53,26),
(59,58),
(65,4),
(67,22),
(71,10),
(71,14),
(73,6),
(73,9),
(73,18),
(76,25),
(81,4),
(81,8),
(81,10),
(83,82),
(89,4),
(93,46),
(94,93),
(100,9)\\
\hline
\end{tabular}
}
\end{center}
\end{table}

We note that a $21$-ary PS of period 105 does not exist by using Corollary~\ref{cor.Brock}. It differs from the other pairs in the first  row of 
Table~\ref{tab.m_ary} as $21$ is product of distinct primes.
One of the smallest undecided cases is whether a $6$-ary NPS of period $11, 12, 13$ for $\gamma = -1,0,1$, respectively, exists.

\begin{table}[!t]
\caption{Periods $v \leq 100$ such that an almost $m$-ary NPS does not exist.}
\label{tab.almost_m_ary}
\small{
\begin{center}
\begin{tabular}{|l|p{0.7\textwidth}|}
\hline
 &$(v,m)$\\
\hline
$\gamma=0$
&(11,9),
(27,25),
(35,33),
(47,9),
(59,57),
(67,65),
(71,69),
(77,25),
(79,77),
(83,9),
(83,27),
(83,81),\\
\hline
$\gamma=1$
&(6,3),
(7,2),
(7,4),
(8,5),
(11,2),
(12,3),
(12,9),
(13,2),
(13,5),
(13,10),
(14,11),
(15,2),
(15,4),
(16,13),
(18,3),
(18,5),
(20,17),
(21,2),
(21,3),
(21,6),
(21,9),
(21,18),
(22,19),
(23,2),
(23,4),
(24,3),
(25,2),
(27,2),
(28,5),
(28,25),
(29,2),
(29,13),
(29,26),
(30,3),
(30,9),
(30,27),
(31,2),
(31,4),
(31,7),
(31,14),
(31,28),
(32,29),
(34,31),
(35,2),
(36,3),
(36,11),
(36,33),
(37,17),
(38,5),
(39,2),
(39,4),
(40,37),
(41,2),
(42,3),
(42,13),
(43,2),
(43,4),
(43,5),
(43,8),
(43,10),
(44,41),
(45,2),
(45,3),
(45,6),
(46,43),
(47,2),
(47,4),
(48,3),
(48,5),
(48,9),
(49,2),
(52,7),
(52,49),
(53,2),
(53,5),
(53,10),
(53,25),
(53,50),
(54,3),
(54,17),
(55,2),
(55,4),
(56,53),
(57,2),
(58,5),
(58,11),
(58,55),
(59,2),
(60,3),
(60,19),
(60,57),
(61,2),
(61,29),
(61,58),
(62,59),
(63,2),
(63,4),
(64,61),
(66,3),
(66,7),
(66,9),
(66,21),
(66,63),
(67,2),
(67,4),
(68,5),
(68,13),
(68,65),
(69,2),
(69,3),
(69,6),
(69,11),
(69,22),
(69,33),
(69,66),
(70,67),
(71,2),
(71,4),
(71,17),
(71,34),
(72,3),
(72,23),
(72,69),
(75,2),
(76,73),
(77,2),
(77,37),
(77,74),
(78,3),
(78,5),
(78,25),
(79,2),
(79,4),
(79,19),
(79,38),
(79,76),
(80,7),
(80,11),
(80,77),
(81,2),
(81,3),
(81,6),
(81,13),
(81,26),
(83,2),
(84,3),
(84,9),
(84,27),
(84,81),
(85,2),
(85,41),
(85,82),
(86,83),
(87,2),
(87,4),
(88,5),
(88,17),
(89,2),
(90,3),
(90,29),
(91,2),
(92,89),
(93,2),
(93,3),
(93,5),
(93,6),
(93,9),
(93,10),
(93,18),
(94,7),
(94,13),
(95,2),
(95,4),
(96,3),
(97,2),
(98,5),
(98,19),
(100,97)\\
\hline
\end{tabular}
\end{center}
}
\end{table}

Similarly, we present non-existence results of almost $m$-ary NPS by using Corollary~\ref{cor.Brock}. We list in Table \ref{tab.almost_m_ary} all periods $v \leq 100$ such that an almost $m$-ary NPS does not exist for $m \leq100$. We only list new non-existence results. The pairs $(v,m)$ listed 
in \cite{CTZ2010} such that an almost $m$-ary NPS of period $v$ does not exist are not listed again in Table 
\ref{tab.almost_m_ary}. In particular, we extend the tables in \cite{CTZ2010} to composite $m \leq 100$. 
We note that Corollary~\ref{cor.Brock} does not 
yield any non-existence result for $\gamma = -1$ as the determinant vanishes.
Finally, let us note that 
one of the smallest open cases is whether an almost $6$-ary NPS of period $13,14,15$ for $\gamma = -1,0,1$ respectively, exists.

In addition, by using Corollary~\ref{thm.main.had}, we conclude that a 23-ary NPS of periods 25, 117 and 163 for $\gamma = 2$ and a 31-ary NPS of period 32 for $\gamma = 1$ do not exist. Similarly, by using Corollary~\ref{thm.main.conf}, no almost 23-ary NPS 
of period 3362 for $\gamma = 2$ exists.

\section*{Acknowledgment}
The first author is supported by the Austrian Science Fund (FWF): Project F5511-N26 
which is part of the Special Research Program "Quasi-Monte Carlo Methods: Theory and Applications".
The second author is supported by T\"{U}B\.{I}TAK under Grant No.\ 2219. The last author is supported by the Austrian Science Fund (FWF) under the project P24801-N26. We also want to thank Clemens Heuberger for several 
fruitful discussions on this topic during the special semester on
Applications of Algebra and Number Theory held at the Johann Radon Institute for Computational and Applied Mathematics (RICAM) in 
Linz, October 14 - December 13, 2013.

\bibliographystyle{amsplain}
\bibliography{wyz_hadamard}

\end{document}